\documentclass[12pt,reqno]{amsart}
\usepackage{etex}
\usepackage{amssymb,tikz-cd}
\usepackage{mathrsfs}
\usepackage{palatino}
\usepackage{bbm}
\usepackage{pdflscape}
\usepackage{mathrsfs}
\usepackage{cases}
\usepackage{epic}
\usepackage{amsfonts}
\usepackage{graphicx}
\usepackage{amsmath}
\usepackage{amssymb, upgreek}
\usepackage{bm}
\usepackage{latexsym,todonotes}
\usepackage{pdflscape}
\usepackage[all]{xypic}
\usepackage{color}
\usepackage{colordvi}
\usepackage{multicol}
\usepackage[normalem]{ulem}
\usepackage[linktocpage=true]{hyperref}
\usepackage{hyperref}
\hypersetup{colorlinks,linkcolor=blue,urlcolor=cyan,citecolor=red}
\usepackage{pictex}
\usepackage{amscd}
\usepackage{latexsym}
\usepackage{amssymb}
\usepackage{eucal}
\usepackage{eufrak}
\usepackage{verbatim}

\newcommand{\ZZ}{\mathbb{Z}}

\newcommand{\CC}{\mathcal{C}}

\newcommand{\cZ}{\mathcal{Z}}

\newcommand{\fb}{\mathfrak{b}}

\newcommand{\fh}{\mathfrak{h}}

\newcommand{\fn}{\mathfrak{n}}

\newcommand{\fq}{\mathfrak{q}}

\newcommand{\gl}{\mathfrak{gl}}

\newcommand{\Yq}{\Y(\mathfrak{q}_1)}

\DeclareMathOperator{\ad}{ad}

\DeclareMathOperator{\ev}{ev}

\DeclareMathOperator{\End}{End}
\DeclareMathOperator{\odd}{odd}

\DeclareMathOperator{\Id}{Id}

\DeclareMathOperator{\Mat}{Mat}

\DeclareMathOperator{\Y}{Y}

\DeclareMathOperator{\U}{U}
\numberwithin{equation}{section}
\newtheorem{Theorem}{Theorem}[section]
\newtheorem{Lemma}[Theorem]{Lemma}
\newtheorem{Corollary}[Theorem]{Corollary}
\newtheorem{Proposition}[Theorem]{Proposition}
\newtheorem{Remark}[Theorem]{Remark}

\theoremstyle{Theorem}

\newtheorem*{thm*}{Theorem}
\newtheorem*{thm**}{Corollary}
\newtheorem*{thm***}{Theorem B}
\theoremstyle{remark}

\numberwithin{equation}{section}

\begin{document}
\title[]{A note on the center of the queer super Yangian $\Y(\fq_1)$}
\author[Hao Chang \lowercase{and} Hongmei Hu]{Hao Chang \lowercase{and} Hongmei Hu*}
\address[Hao Chang]{School of Mathematics and Statistics, Central China Normal University, Wuhan 430079, China}
\email{chang@ccnu.edu.cn}
\address[Hongmei Hu]{Department of Mathematics, Shanghai Maritime University, Shanghai 201306, China}
\email{hmhu@shmtu.edu.cn}
\date{\today}
\thanks{*~Corresponding author.}
\subjclass[2020]{Primary 17B37}

\begin{abstract}
We investigate two families of central elements of the queer super Yangian $\Yq$,
arising from the constructions of Poletaeva-Serganova and from Nazarov's quantum Berezinian. 
We establish an explicit relation between their generating series, thereby giving an answer to the question raised by Nazarov in \cite{Na22}.
\end{abstract}
\maketitle
\section{Introduction}
The Yangian $\Y(\fq_n)$ of the queer Lie superalgebra $\fq_n$ was introduced by Nazarov \cite{Na92, Na99}.
It can be viewed as a Hopf algebra deformation of the enveloping algebra of the twisted current Lie superalgebra associated with $\fq_n$.
Nazarov also constructed a central series $\cZ(u)$ for $\Y(\fq_n)$ and showed that its nonconstant coefficients freely generate the center of the Yangian (see \cite[Section 3]{Na99}).

Finite $W$-algebras associated with queer Lie superalgebras are closely related to the Yangian $\Yq$.
In \cite{PS16},
Poletaeva and Serganova established a connection between the (principal) finite $W$-algebras for $\fq_n$ and the Yangian $\Yq$. Their work was further extended to the non-regular case in \cite{PS17}, where the finite $W$-algebras associated with non-regular nilpotent elements were studied in relation to the Yangian. 
In their subsequent study of representations of the principal finite $W$-algebra and the Yangian $\Yq$,
Poletaeva and Serganova introduced a distinguished family of central elements of $\Yq$, see \cite[Lemma 5.1]{PS21}.
The Harish--Chandra images of these elements admit an explicit description in terms of symmetric polynomials. 

Another family of central elements arises from Nazarov's quantum Berezinian.
In \cite{Na22}, Nazarov studied the queer Yangian $\Yq$ and introduced the quantum Berezinian $C(u)$, whose odd coefficients freely generate the center of $\Yq$.
More recently,
a new expression for the central series of $\Y(\fq_n)$ in terms of its Gauss generators was obtained in \cite{CW25}.

The purpose of this note is to determine explicitly the relation between the central elements constructed by Poletaeva and Serganova and those arising from Nazarov's quantum Berezinian. 
Our approach combines the quotient homomorphisms from $\Yq$ to the finite $W$-algebras with Nazarov's tensor-evaluation homomorphisms. After passing to the Harish--Chandra realization,
the Poletaeva--Serganova central elements are described by symmetric polynomials, 
while the image of the quantum Berezinian is described by a compatible matrix factorization. 
By comparing these two descriptions, we obtain an explicit relation between the corresponding generating series.

The article is organized as follows. In Section \ref{pre}, 
we recall the necessary background on the queer Lie superalgebras, finite $W$-algebras and the super Yangian $\Yq$, together with the relevant quotient and evaluation homomorphisms. 
In Section \ref{sec-center}, we compare the two families of central elements, namely, those constructed by Poletaeva and Serganova and those arising from Nazarov's quantum Berezinian, and derive an explicit generating series relation between them. 
We conclude the section with a remark (Remark \ref{Remark}) on their connection with Nazarov's earlier central series $\cZ(u)$.

\section{Preliminaries}\label{pre}
\subsection{The Queer Lie superalgebra $\fq_n$}\label{Section:lie super qn}
Let the indices $i,j$ run through $\pm 1,\dots,\pm n$.
We will always wtire $\bar i=0$ if $i>0$ and $\bar i=1$ if $i<0$. 
Consider the $\ZZ_2$-graded vector space $\mathbb{C}^{n|n}$.
Let $E_{i,j}\in\End \mathbb{C}^{n|n}$ be the standard matrix units.
The algebra $\End\mathbb{C}^{n|n}$ is $\ZZ_2$-graded so that $\deg E_{i,j}=\bar i+\bar j$.
We will also regard $E_{i,j}$ as generators of the complex Lie superalgebra $\gl_{n|n}$.
The {\it queer} Lie superalgebra $\fq_n$ is the fixed point subalgebra in $\gl_{n|n}$ with respect to the involutive automorphism
\[
    E_{i,j}\mapsto E_{-i,-j}.
\]
It is easy to see that $\fq_n$ consists of matrices of the form:
\begin{align}\label{matrix form qn}
\begin{bmatrix}
A & B\\
B & A
\end{bmatrix},
\end{align}
where $A$ and $B$ are arbitrary $n\times n$ matrices.
The subalgebra $\fh_n$ consisting of matrices with $A,B$ being diagonal will be called the {standard Cartan sublagbera}.
By $\fn^+$ (respectively, $\fn^-$) we denote the
nilpotent subalgebras consisting of matrices with strictly upper triangular (respectively,
lower triangular) $A$ and $B$.
The Lie superalgebra $\fq_n$ has the triangular decomposition $\fq_n=\fn^-\oplus\fh_n\oplus\fn^+$ (see for instance \cite[1.2.6]{CW12}), and we set $\fb:=\fh_n\oplus \fn^+$.
\subsection{Finite $W$-algebra for $\fq_n$}
Now let us recall that the finite $W$-algebra $W^n$ associated with a principal even nilpotent element $\chi$ in the coadjoint representation of $\fq_n$ (see \cite{PS16, PS17}).
Note that a linear basis for $\fq_n$ consists of the following elements:
\begin{align}\label{basis for qn}
e_{i,j}:=E_{i,j}+E_{-i,-j},~f_{i,j}:=E_{i,-j}+E_{i-,j},\quad 1\leq i,j\leq n.
\end{align}
Choose $\chi\in\fq_n^*$ such that
\[
    \chi(f_{i,j})=0,~\chi(e_{i,j})=\delta_{i,j+1}.
\]
Let $I_{\chi}$ be the ideal in $\U(\fq_n)$ generated by $x-\chi(x)$ for all $x\in\fn^-$.
We denote by $\pi:\U(\fq_n)\rightarrow \U(\fq_n)/I_\chi$ the natural projection.
Then 
\[
    W^n:=\{\pi(y)\in \U(\fq_n)/I_\chi;~(x-\chi(x))y\in I_\chi~{\rm for~all}~x\in\fn^-\}.
\]
Using identification of $\U(\fq_n)/I_\chi$ with the Whittaker module, 
we can consider $W^n$ as a subalgebra of $\U(\fb)$ (cf. \cite[Section 2.1]{PS16}).
Let $\vartheta: \U(\fb)\rightarrow \U(\fh_n)$ be the canonical projection. 
Its restriction to $W^n$ is the {\it Harish-Chandra homomorphism}
\begin{align}\label{HC-map}
\vartheta:W^n\rightarrow \U(\fh_n),
\end{align}
which is injective by \cite[Theorem 3.1]{PS16}.
Set 
\[
\xi_i:=(-1)^{i+1}f_{i,i}, x_i:=\xi_i^2=e_{i,i}
\]
then
\begin{align}\label{uhn}
\U(\fh_n)\cong \mathbb{C}[\xi_1,\dots,\xi_n]/(\xi_i\xi_j+\xi_j\xi_i)_{1\leq i<j\leq n}.
\end{align}
\subsection{The super Yangian $\Yq$}\label{section:yangian}
The super Yangain $\Y(\fq_n)$ associated with the Lie superalgebra $\fq_n$ was introduced by M. Nazarov in \cite{Na92}.
In this paper, we only need the special case of $n=1$.
Recall that $\Yq$ is a complex associative unital algebra with a set of generators
\[
 t_{i,j}^{(r)},~{\rm where}~ r=1,2,\dots~{\rm and}~i,j=\pm 1.
\]
The $\ZZ_2$-grading of the algebra $\Yq$ is defined via $\deg t_{i,j}^{(r)}=\bar i+\bar j$.
To write down defining relations for these generators of $\Yq$,
we will use the formal power series in $u^{-1}$ with coefficients from $\Yq$,
\[
    t_{i,j}(u)=\delta_{i,j}+t_{i,j}^{(1)}u^{-1}+t_{i,j}^{(2)}u^{-2}+\cdots.
\]
Then for all possible indices $i,j,k,l$ we have the relations
\begin{align}\label{Tijtkl relation}
&[t_{i,j}(u),t_{k,l}(v)](-1)^{\bar i\bar k+\bar i\bar l+\bar k\bar l}\\ \nonumber  
=&\frac{t_{k,j}(u)t_{i,l}(v)-t_{k,j}(v)t_{i,l}(u)}{u-v}-\frac{t_{-k,j}(u)t_{-i,l}(v)-t_{k,-j}(v)t_{i,-l}(u)}{u+v}(-1)^{\bar k+\bar l}
\end{align}
in $\Yq[[u^{-1},v^{-1}]]$.
The square brackets above stand for the supercommutator.
For all indices $i, j$ we also have the relations
\begin{align}\label{tij-u=t-i-iu}
t_{i,j}(-u)=t_{-i,-j}(u).    
\end{align}
Then these power series can be collected together into a single matrix
\[
    T(u):=\sum\limits_{i,j}E_{i,j}\otimes t_{i,j}(u)\in \End\mathbb{C}^{1|1}\otimes \Yq[[u^{-1}]].
\]
Moreover, the element $T(u)$ of the algebra $\End\mathbb{C}^{1|1}\otimes \Yq[[u^-1]]$ is invertible, we put
\[
    T(u)^{-1}=\sum\limits_{i,j}E_{i,j}\otimes \tilde{t}_{i,j}(u).
\]

Recall that $\Yq$ is a Hopf superalgebra, see \cite{Na99},
with comultiplication $\Delta: \Yq\rightarrow \Yq\otimes \Yq$ given by the formula
\begin{align}\label{comul}
    \Delta(t_{i,j}^{(r)})=\sum\limits_{s=0}^r\sum\limits_{k}(-1)^{(\bar i+\bar k)(\bar j+\bar k)}t_{i,k}^{(s)}\otimes t_{k,j}^{(r-s)}.
\end{align}
For the Lie superalgebra $\fq_1$,
we will choose the basis of $\fq_1$ consisting of two elements,
\begin{align}\label{x and xi}
x:=E_{1,1}+E_{-1,-1},~\xi:=E_{1,-1}+E_{-1,1}.
\end{align}
Due to \eqref{Tijtkl relation} there is a homomorphism
\begin{align}\label{ev map}
\ev: \Yq\rightarrow \U(\fq_1);~t_{1,1}^{(1)}\mapsto -x, t_{1,-1}^{(1)}\mapsto \xi, t_{i,j}^{(r)}\mapsto 0~{\rm for}~r>1.
\end{align}
It is called the {\it evaluation homomorphism} (see \cite[(2.12)]{Na99}, \cite[(4.11)]{PS17}).
Moreover, we write $\Delta_n:\Yq\rightarrow \Yq^{\otimes n}$ for the comultiplication $\Delta$ \eqref{comul} iterated $n-1$ times.
Let
\begin{align}\label{gamman}
 \gamma_n := \ev^{\otimes n}\circ \Delta_n : \Yq \longrightarrow \U(\fq_1)^{\otimes n}      
\end{align}
be the homomorphism obtained by applying $\Delta_n$ first then	
applying the evaluation homomorphism \eqref{ev map} to each tensor factor of $\Yq^{\otimes n}$.
In \cite{Na22}, Nazarov proved that
the kernels of all the homomorphisms $\gamma_n$ with $n=1, 2,\dots$ have
zero intersection, i.e.,
\begin{align}\label{intersection=0 for rn}
\bigcap\limits_{n\geq 1}{\rm Ker}\gamma_n=(0).
\end{align}

It is shown in \cite{PS16} that there exists a surjective homomorphism $\varphi_n:\Yq\rightarrow W^n$, see also \cite[Corollary 5.10]{PS17}. 
To write the map $\varphi_n$ explicitly, we introduce a family of elements in the universal enveloping algebra $\U(\fh_n)$.
For $1\leq i_1<\cdots i_r\leq n$, set
\[
G_r(i_1,\dots,i_r):=(x_{i_1}+(-1)^{r+1}\xi_{i_1})(x_{i_2}+(-1)^r\xi_{i_2})\cdots (x_{i_r}+\xi_{i_r}).
\]
Composed with the Harish-Chandra map $\vartheta$ \eqref{HC-map},
the surjection $\varphi_n$ is characterized as follows:
\begin{align}\label{hc-varphi-even}
 \vartheta\circ\varphi_n(t_{1,1}^{(r)})=(-1)^{r}\big[\sum\limits_{1\leq i_1<\cdots i_r\leq n}  G_r(i_1,\dots,i_r)\big]_{\rm even},
\end{align}
\begin{align}\label{hc-varphi-odd}
 \vartheta\circ\varphi_n(t_{-1,1}^{(r)})=(-1)^{r}\big[\sum\limits_{1\leq i_1<\cdots i_r\leq n}  G_r(i_1,\dots,i_r)\big]_{\rm odd}.
\end{align}
Both expressions vanish for $r>n$ (see \cite[Theorem 6.2]{PS16} and \cite[Corollary 5.10]{PS17}).
\subsection{The relation between $\varphi_n$ and $\gamma_n$}
For $p=1,\dots,n$,
let $\iota_p:\U(\fq_1)\rightarrow \U(\fq_1)^{\otimes n}$ be the embedding into the tensor product
as the $p$th tensor factor.
Then there is a superalgebra isomorphism
\[
\kappa_n: \U(\fh_n)\xrightarrow{\sim} \U(\fq_1)^{\otimes n};~x_p\mapsto \iota_p(x),~\xi_p\mapsto\iota_p(\xi) .
\]
In view of \cite[page 17]{PS17}, we can also identify $\U(\fh_n)$ with $U(\fq_1)^{\otimes n}$ by setting
\[
\jmath_n: \U(\fh_n)\xrightarrow{\sim} \U(\fq_1)^{\otimes n};~\jmath_n(x_p)=\iota_{n+1-p}(x),~\jmath_n(\xi_p)=\iota_{n+1-p}(\xi).  
\]
An involutive automorphism $\tau_n$ of $\U(\fq_1)^{\otimes n}$ is defined by
\[
 \tau_n: \U(\fq_1)^{\otimes n}\xrightarrow{\sim}\U(\fq_1)^{\otimes n};~c_1\otimes\cdots\otimes c_n\mapsto (-1)^{\sum_{i<j}\deg c_i\deg c_j}c_n\otimes\cdots \otimes c_1,
\]
where all $c_i$ are homogeneous elements.
Then we have 
\begin{align}
  \jmath_n=\tau_n\circ \kappa_n.  
\end{align}
\begin{Lemma}\label{lamma:j hc varn=gamman}
$\jmath_n\circ \vartheta\circ\varphi_n=\gamma_n$.
\end{Lemma}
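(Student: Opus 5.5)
Both sides of the asserted identity are superalgebra homomorphisms $\Yq\to\U(\fq_1)^{\otimes n}$. It therefore suffices to check the identity on the generators $t_{i,j}^{(r)}$, $r\geq1$, $i,j=\pm1$.

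\textbf{Reduction to two families of generators.} The relation \eqref{tij-u=t-i-iu} gives $t_{-1,-1}^{(r)}=(-1)^r t_{1,1}^{(r)}$ and $t_{1,-1}^{(r)}=(-1)^r t_{-1,1}^{(r)}$. Hence the images of $t_{1,1}^{(r)}$ and $t_{-1,1}^{(r)}$ determine everything. I would package the comparison through generating series, computing $\gamma_n(t_{1,1}(u))$ and $\gamma_n(t_{-1,1}(u))$ directly. By \eqref{comul}, $\Delta_n(T(u))$ is the ordered matrix product $T^{[1]}(u)\cdots T^{[n]}(u)$, with the appropriate super sign conventions. Under $\ev$ the matrix $T(u)$ becomes the $2\times2$ matrix
\[ 1+u^{-1}\begin{bmatrix} -x & \xi\\ \mp\xi & \pm x\end{bmatrix}, \]
with signs fixed by \eqref{tij-u=t-i-iu}. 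Thus $\gamma_n(t_{i,j}(u))$ is the $(i,j)$ entry of a product of $n$ such matrices, the $p$-th one having entries in $\iota_p(\U(\fq_1))$. Expanding, the coefficient of $u^{-r}$ is a sum over $1\le p_1<\dots<p_r\le n$ of products of entries of the factors indexed by $p_1,\dots,p_r$. Each index path $1=k_0,k_1,\dots,k_r=\pm1$ contributes a product of factors $\pm x_{p_s}$ or $\pm\xi_{p_s}$.

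\textbf{Matching with $G_r$.} The next step is to show that, for fixed $p_1<\dots<p_r$, this sum over paths equals $(-1)^r$ times the even (respectively odd) part of $\jmath_n(G_r(i_1,\dots,i_r))$, where $i_s=n+1-p_{r+1-s}$. The reversal of order comes from $\jmath_n$ sending $x_p,\xi_p$ to the $(n+1-p)$-th tensor slot, and $G_r$ lists the factors in increasing $i$. This is why $\jmath_n=\tau_n\circ\kappa_n$ appears: $\tau_n$ reverses the tensor order with the Koszul sign. I would proceed by induction on $r$, peeling off the last factor. Write $a_r$ for the even part and $b_r$ for the odd part of the path sum ending at $1$ and $-1$ respectively. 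The matrix product yields a recursion of the form $(a_{r},b_{r})\leftarrow(a_{r-1},b_{r-1})$ multiplied by the new factor's entries. On the other side, $G_r=(x_{i_1}+(-1)^{r+1}\xi_{i_1})\,G'_{r-1}$, and the alternating signs $(-1)^{r+1},(-1)^r,\dots$ are exactly what make the even/odd splitting satisfy the same recursion. Concretely, the product of $r$ elements $x\pm\xi$ with alternating signs reproduces a product of the $2\times 2$ matrices once we track parity. One also verifies that terms with $r>n$ vanish on both sides, which is immediate since there are no $r$-subsets.

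\textbf{Main obstacle.} The hard part is purely bookkeeping of signs. Three sources interact: the super sign $(-1)^{(\bar i+\bar k)(\bar j+\bar k)}$ in \eqref{comul}, the Koszul sign in $\tau_n$ when odd $\xi$'s are reordered, and the alternating signs in $G_r$ together with the global $(-1)^r$ in \eqref{hc-varphi-even}--\eqref{hc-varphi-odd}. I would first check the cases $n=1$ and $n=2$ explicitly. For $n=1$ the identity should reduce to \eqref{ev map} together with $G_1=x_1+\xi_1$, $\vartheta\varphi_1(t_{1,1}^{(1)})=-x_1$, $\vartheta\varphi_1(t_{-1,1}^{(1)})=-\xi_1$; for this to match \eqref{ev map} one needs the $\pm$ sign conventions of $t_{-1,1}$ versus $t_{1,-1}$ to agree. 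With the $n=2$ case as a template, the general inductive step is then routine. Finally, since both maps are homomorphisms and agree on generators, they coincide.
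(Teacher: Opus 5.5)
Your plan is correct, and it takes a different route from the paper. The paper does no computation: after identifying $\U(\fh_n)$ with $\U(\fq_1)^{\otimes n}$ via $\jmath_n$, it reads the lemma off from \cite[Theorem 5.14]{PS17}. You instead check the identity on generators directly from \eqref{hc-varphi-even}--\eqref{hc-varphi-odd}, \eqref{comul}, \eqref{ev map} and \eqref{tij-u=t-i-iu}. The sign bookkeeping you defer does close up, as follows. Write $X_p=\iota_p(x)$ and $\Xi_p=\iota_p(\xi)$, so the $p$th evaluated factor is $1+u^{-1}A_p$ with $A_p=\begin{bmatrix}-X_p&\Xi_p\\ -\Xi_p&X_p\end{bmatrix}$. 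For a chain $p_1<\dots<p_r$ put $\pi^{(r)}=A_{p_1}\cdots A_{p_r}$ (super tensor product) and $g_r=\jmath_n(G_r(i_1,\dots,i_r))$ with $i_s=n+1-p_{r+1-s}$. Then $g_r=(X_{p_r}+(-1)^{r-1}\Xi_{p_r})\,g_{r-1}$, where $g_{r-1}$ is the analogous element for the chain $p_1<\dots<p_{r-1}$. Multiplying on the right by $A_{p_r}$, respectively moving $\Xi_{p_r}$ past $g_{r-1}$, gives
\begin{align*}
\pi^{(r)}_{1,1}&=-\pi^{(r-1)}_{1,1}X_{p_r}+\pi^{(r-1)}_{1,-1}\Xi_{p_r}, & \pi^{(r)}_{1,-1}&=\pi^{(r-1)}_{1,1}\Xi_{p_r}+\pi^{(r-1)}_{1,-1}X_{p_r},\\
g_r^{\rm even}&=g_{r-1}^{\rm even}X_{p_r}+(-1)^r g_{r-1}^{\rm odd}\Xi_{p_r}, & g_r^{\rm odd}&=(-1)^{r-1}g_{r-1}^{\rm even}\Xi_{p_r}+g_{r-1}^{\rm odd}X_{p_r}.
\end{align*}
By induction from $r=0$ this yields $\pi^{(r)}_{1,1}=(-1)^r g_r^{\rm even}$ and $\pi^{(r)}_{1,-1}=g_r^{\rm odd}$. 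The recursion closes on the first row, i.e.\ on $t_{1,1}$ and $t_{1,-1}$, which matches your paths starting at $1$. So you reach \eqref{hc-varphi-odd} only after applying $t_{1,-1}^{(r)}=(-1)^r t_{-1,1}^{(r)}$.

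What your route buys is an actual check that the conventions used here are mutually compatible: the sign in $\ev$, the super sign in $\Delta$, the alternating signs in $G_r$, and the order reversal built into $\jmath_n$. For instance, with $\kappa_n$ in place of $\jmath_n$ the identity already fails for $n=2$ on both $t_{1,1}^{(2)}$ and $t_{-1,1}^{(2)}$. The paper's citation is shorter, but it leaves that compatibility with the conventions of \cite{PS17} implicit. Both arguments ultimately rest on Poletaeva--Serganova; yours does so through the explicit formulas \eqref{hc-varphi-even}--\eqref{hc-varphi-odd}.
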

\begin{proof}
Remember that $\gamma_n=\ev^{\otimes n}\circ \Delta_n$.
The identification $\jmath_n$ in conjunction with \cite[Theorem 5.14]{PS17} would imply that $\jmath_n^{-1}\circ \gamma_n(t_{i,j}^{(r)})=\vartheta\circ \varphi_n(t_{i,j}^{(r)})$, as desired.
\end{proof}
\section{Then center of $\Yq$}\label{sec-center}
\subsection{Poletaeva and Serganova's central elements}
We first recall a family of central elements introduced by Poletaeva and Serganova in \cite{PS21}.
For $i\geq 0$, let
\[
\eta_i:=(-\frac{1}{2})^i\ad^i t_{1,1}^{(2)}(t_{1,-1}^{(1)}), Z_{2i}:=\frac{1}{2}[\eta_0,\eta_{2i}],
\]
where $\ad^i t_{1,1}^{(2)}$ is the $i$th power of the adjoint endomorphism $\ad t_{1,1}^{(2)}$.
The elements $\{Z_{2k};~k\geq 0\}$ are algebraically independent generators of the center of $\Yq$ (see \cite[Lemma 5.1]{PS21}).
Write $z_{2k}=\vartheta\circ \varphi_n(Z_{2k})\in \U(\fh_n)$ for $0\leq k\leq n-1$.
Thanks to \cite[Lemma 5.3]{PS21}, the elements $\{z_{2k};~0\leq k\leq n-1\}$ can be expressed in terms of symmetric polynomials of $x_1,\dots,x_n$.
More precisely,
we have 
\begin{align}\label{recursion exp of z2i}
z_{2k}=-\sum\limits_{i=1}^{k}\sigma_{2i}z_{2k-2i}+\sigma_{2k+1},   
\end{align}
where $\sigma_p=\sigma_p(x_1,\dots,x_n)=\sum_{i_1<\cdots<i_p}x_{i_1}\dots x_{i_p}$ is the elementary symmetric function.
For $k=0$, this means $z_0=\sigma_1=x_1+\cdots+x_n$ (see \cite[Page 145]{PS21}).
Set
\[
E_{\ev}(u)=\sum\limits_{i\geq 0}\sigma_{2i}u^{-2i},\quad E_{\odd}(u):=\sum\limits_{i\geq 0}\sigma_{2i+1}u^{-2i-1}.
\]
By convention, we set $\sigma_0=1$, note that $\sigma_j=0$ for $j>n$.

We define
\begin{align}\label{HuQu}
  H(u):=\sum\limits_{i\geq 0}Z_{2i}u^{-2i-1},~Q(u):=(1+H(u))(1-H(u))^{-1}.  
\end{align}
\begin{Proposition}\label{prop:hc-varphin-Hu}
For every $n\geq 1$, we have
\begin{align}\label{hc-phin-EoddoverEev}
\vartheta\circ \varphi_n(H(u))=\frac{E_{\odd}(u)}{E_{\ev}(u)},      
\end{align}
\begin{align}\label{hc-phin-Qu}
\vartheta\circ \varphi_n(Q(u))=\prod\limits_{p=1}^n\frac{1+x_pu^{-1}}{1-x_pu^{-1}}.        
\end{align}
\end{Proposition}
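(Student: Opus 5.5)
The plan is to turn the recursion \eqref{recursion exp of z2i} into an identity of generating series, and then to rewrite that identity in product form.

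First, apply the homomorphism $\vartheta\circ\varphi_n$ coefficientwise to the series $H(u)$. This gives
\[
\vartheta\circ\varphi_n(H(u))=\sum_{k\geq 0} z_{2k}u^{-2k-1}.
\]
Here $z_{2k}$ is defined by \eqref{recursion exp of z2i} for all $k\geq 0$. For $k\geq n$ we still need that the images of $Z_{2k}$ satisfy the same recursion, with $\sigma_j=0$ for $j>n$. I would take this from \cite[Lemma 5.3]{PS21}, whose argument does not use $k\le n-1$. If it is not available, the series identity can be checked as an identity in $u^{-1}$ for each $n$, treating the higher $z_{2k}$ through the same formula.

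Rewrite the recursion as
\[
\sum_{i=0}^{k}\sigma_{2i}z_{2k-2i}=\sigma_{2k+1},\qquad \sigma_0=1.
\]
Multiply by $u^{-2k-1}$ and sum over $k\geq 0$. The left side is the Cauchy product $E_{\ev}(u)\cdot\sum_k z_{2k}u^{-2k-1}$, and the right side is $E_{\odd}(u)$. Every element involved lies in the commutative subalgebra $\mathbb{C}[x_1,\dots,x_n]$ of $\U(\fh_n)$, since $x_i=\xi_i^2$ is central there. Also $E_{\ev}(u)$ has constant term $1$, so it is invertible in $\mathbb{C}[x][[u^{-1}]]$. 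Dividing by it gives \eqref{hc-phin-EoddoverEev}.

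For \eqref{hc-phin-Qu}, use the factorization
\[
E(u):=\prod_{p=1}^n(1+x_pu^{-1})=E_{\ev}(u)+E_{\odd}(u)
\]
and its sign-flipped version
\[
\prod_{p=1}^n(1-x_pu^{-1})=E_{\ev}(u)-E_{\odd}(u).
\]
Since $\vartheta\circ\varphi_n$ is a homomorphism into the commutative setting, it commutes with taking $(1+H)(1-H)^{-1}$. Writing $h=E_{\odd}/E_{\ev}$ for the image of $H(u)$, we get
\[
\frac{1+h}{1-h}=\frac{E_{\ev}+E_{\odd}}{E_{\ev}-E_{\odd}},
\]
which is the product on the right of \eqref{hc-phin-Qu}. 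The inverse $(1-H(u))^{-1}$ makes sense because $H(u)$ has no constant term.

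The only genuine subtlety, which I expect to be the main obstacle, is the claim about $z_{2k}$ for $k\ge n$. The recursion in the paper is stated only for $0\le k\le n-1$, but $H(u)$ involves every $Z_{2k}$. I would settle this by invoking the general form of \cite[Lemma 5.3]{PS21}. Failing that, I would use compatibility of the maps $\varphi_n$ for different $n$ via $\gamma_n$ and Lemma \ref{lamma:j hc varn=gamman}: embed into a larger $m\ge k+1$ and then specialize $x_{n+1}=\dots=x_m=0$. This specialization is realized by the counit on the extra tensor factors, which is compatible with $\Delta$ and sends $\gamma_m$ to $\gamma_n$.
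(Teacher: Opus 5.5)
Your proposal is correct and is essentially the paper's proof: you turn the recursion into $E_{\ev}(u)\cdot\sum_{k\geq 0}z_{2k}u^{-2k-1}=E_{\odd}(u)$, invert $E_{\ev}(u)$ using $\sigma_0=1$, and then use $\prod_{p}(1\pm x_pu^{-1})=E_{\ev}(u)\pm E_{\odd}(u)$. The paper states the recursion only for $k\leq n-1$ but sums over all $k\geq 0$ without comment, so your counit argument supplies a justification the paper leaves implicit: passing from $\gamma_m$ to $\gamma_n$ kills the extra $x_p,\xi_p$ and specializes the elementary symmetric functions correctly.
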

\begin{proof}
We first prove that \eqref{hc-phin-EoddoverEev}.
Rewrite \eqref{recursion exp of z2i} as 
\[
 z_{2k}+\sum\limits_{i=1}^{k}\sigma_{2i}z_{2k-2i}=\sigma_{2k+1}.  
\]
Multiplying both sides by $u^{-2k-1}$ and taking sum over $k\geq 0$,
we obtain
\[
\sum\limits_{k\geq 0}z_{2k}u^{-2k-1}+\sum\limits_{k\geq 0}(\sum\limits_{i=1}^{k}\sigma_{2i}z_{2k-2i})u^{-2k-1}=\sum\limits_{k\geq 0}\sigma_{2k+1}u^{-2k-1}.
\]
This yields
\[
(\sum\limits_{i\geq 0}\sigma_{2i}u^{-2i})(\sum\limits_{j\geq 0}z_{2j}u^{-2j-1})= E_{\odd}(u).
\]
Since $\sigma_0=1$, this implies $E_{\ev}(u)$ is invertible,
and \eqref{hc-phin-EoddoverEev} follows.

Now let 
\[
E_n(u)=\prod\limits_{p=1}^n(1+x_p u^{-1})=\sum\limits_{j=0}^n\sigma_ju^{-j}.  
\]
Then 
\[
    E_n(u)=E_{\ev}(u)+E_{\odd}(u),\quad E_n(-u)=E_{\ev}(u)-E_{\odd}(u).
\]
Applying \eqref{hc-phin-EoddoverEev} yields that
\begin{align*}
    \vartheta\circ \varphi_n(Q(u))&=\frac{E_{\ev}(u)+E_{\odd}(u)}{E_{\ev}(u)-E_{\odd}(u)}\\
    &=\frac{E_n(u)}{E_n(-u)}=\prod\limits_{p=1}^n\frac{1+x_pu^{-1}}{1-x_pu^{-1}}.
\end{align*}
\end{proof}
\subsection{The quantum Berezinian}
We put 
\[
 C(u):=t_{1,1}(u)\tilde{t}_{1,1}(-u)~{\rm and}~ D(u):=t_{1,-1}(u)\tilde{t}_{1,-1}(u).   
\]
Write 
\[
 C(u)=1+C_1u^{-1}+C_2u^{-2}+\cdots.
\]
It is shown in \cite{Na22} that all the coefficients $C_i$ are central elements,
and $C_1,C_3,\dots$ freely generate the center of $\Yq$.
The series $C(u)$ will be called the {\it quantum Berezinian} for the Yangian $\Yq$,
see \cite[Section VIII]{Na22}.

Now,
consider the matrix with entries from $\Yq[[u^{-1}]]$,
\begin{align}\label{mathcal-Cu}
\mathcal{C}(u):=\begin{bmatrix}
C(u) & D(u)\\
D(-u) & C(-u)
\end{bmatrix}.
\end{align}
In view of \cite[(69)]{Na22},
the homomorphism $\gamma_n$ \eqref{gamman} maps the matrix $\CC(u)$ to the product over $p=1,\dots,n$ of the matrices
\begin{align}\label{matrix gamma image}
\frac{1}{u^2-\iota_p(x)^2-\iota_p(x)}\begin{bmatrix}
(u-\iota_p(x))^2& -\iota_p(x)\\
-\iota_p(x) & (u+\iota_p(x))^2
\end{bmatrix}.    
\end{align}
Since $\deg \iota_p(x)=0$ relative to the $\ZZ_2$-grading on $\U(\fq_1)^{\otimes n}$,
the matrices \eqref{matrix gamma image} commute,
so the ordering of the factors in the product does not matter.
Consequently, Lemma \ref{lamma:j hc varn=gamman} implies that
\begin{align}\label{image-hc-varphin-Cu}
\vartheta\circ\varphi_n(\CC(u))=\prod\limits_{p=1}^n M_{x_p}(u),  
\end{align}
where 
\[
 M_{x_p}(u)=\frac{1}{u^2-x_p^2-x_p}\begin{bmatrix}
(u-x_p)^2& -x_p\\
-x_p & (u+x_p)^2
\end{bmatrix}, \quad p=1,\dots,n.  
\]

Now we introduce more formal series, we let
\begin{equation}\label{delta u}
\begin{aligned}
\delta(u)=\sqrt{1+4u^2}=2u\sqrt{1+\frac{1}{4u^2}}=&2u+\frac{1}{4}u^{-1}-\frac{1}{64}u^{-3}+\cdots\\ 
=&2u(1+\frac{1}{8}u^{-2}-\frac{1}{128}u^{-4}+\cdots),
\end{aligned}
\end{equation}
and set
\begin{align}\label{rho u}
\rho(u):=\frac{\delta(u)-1}{2}.
\end{align}
We denote by $\Id$ the $2\times 2$ identity matrix and define
\begin{align*}
N(u)=\begin{bmatrix}
2u & 1\\
1& -2u
\end{bmatrix}.  
\end{align*}
Note that $N(u)^2=(1+4u^2)\Id$.
We further set
\[
P_+(u)=\frac{1}{2}\left(\Id+\frac{1}{\delta(u)}N(u)\right),\quad P_-(u)=\frac{1}{2}\left(\Id-\frac{1}{\delta(u)}N(u)\right).
\]
Here,
$\displaystyle\frac{1}{\delta(u)}$ is understood as
\begin{align*}
\frac{1}{\delta(u)}=&\frac{1}{2u}(1+\frac{1}{8}u^{-2}-\frac{1}{128}u^{-4}+\cdots)^{-1}\\
=&\frac{1}{2}u^{-1}+\frac{1}{16}u^{-3}+\cdots.
\end{align*}
Then 
\begin{align}\label{p-zhengfu-relation}
P_+(u)^2=P_+(u),\quad  P_-(u)^2=P_-(u),\quad  P_+(u)P_-(u)=0,\quad  P_+(u)+P_-(u)=\Id.
\end{align}

\begin{Lemma}\label{lemma:product Mxu decomposition}
\[
\prod\limits_{p=1}^n M_{x_p}(u)=\left(\prod\limits_{p=1}^n\frac{\rho(u)+1-x_p}{\rho(u)+1+x_p}\right)P_+(u)+\left(\prod\limits_{p=1}^n\frac{\rho(u)+x_p}{\rho(u)-x_p}\right)P_-(u).
\]
\end{Lemma}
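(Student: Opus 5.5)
The plan is to diagonalize each factor $M_{x_p}(u)$ simultaneously using the idempotents $P_\pm(u)$. The product formula then follows formally from \eqref{p-zhengfu-relation}.

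\textbf{Step 1: write each factor in terms of $\Id$ and $N(u)$.} The numerator matrix of $M_{x}(u)$ (writing $x$ for $x_p$) is
\[
\begin{bmatrix}
(u-x)^2& -x\\
-x & (u+x)^2
\end{bmatrix}=(u^2+x^2)\Id-x\,N(u).
\]
Since $N(u)=\delta(u)\bigl(P_+(u)-P_-(u)\bigr)$ and $\Id=P_+(u)+P_-(u)$, this gives
\[
M_x(u)=\frac{u^2+x^2-x\delta(u)}{u^2-x^2-x}\,P_+(u)+\frac{u^2+x^2+x\delta(u)}{u^2-x^2-x}\,P_-(u).
\]
All coefficients are even and commute with the entries of $P_\pm(u)$, which are scalar series in $u^{-1}$. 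Moreover, the $x_p$ for different $p$ commute with each other.

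\textbf{Step 2: simplify the scalars via $\rho(u)$.} From $\delta(u)=2\rho(u)+1$ and $\delta(u)^2=1+4u^2$ we get $u^2=\rho^2+\rho=\rho(\rho+1)$, writing $\rho=\rho(u)$. Substituting gives the factorizations
\[
u^2+x^2-x\delta=(\rho-x)(\rho+1-x),\qquad u^2+x^2+x\delta=(\rho+x)(\rho+1+x),
\]
\[
u^2-x^2-x=(\rho-x)(\rho+1+x).
\]
Each is a direct expansion. Cancelling common factors yields
\[
M_x(u)=\frac{\rho+1-x}{\rho+1+x}P_+(u)+\frac{\rho+x}{\rho-x}P_-(u).
\]
The cancellations are legitimate because $\rho(u)=u+O(u^{-1})$. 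Hence each factor $\rho\pm x$ and $\rho+1\pm x$ has the form $u(1+O(u^{-1}))$ and is invertible after multiplying by $u^{-1}$. All identities should be read in the ring of Laurent series in $u^{-1}$ with coefficients in $\mathbb{C}[x_1,\dots,x_n]$, which is commutative.

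\textbf{Step 3: multiply.} Write $M_{x_p}(u)=a_pP_++b_pP_-$ with commuting scalars $a_p,b_p$. By \eqref{p-zhengfu-relation}, $(aP_++bP_-)(a'P_++b'P_-)=aa'P_++bb'P_-$. Induction on $n$ then gives the stated formula.

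The only point needing care is Step 2, specifically getting the factorizations right and justifying the divisions in the series ring. Steps 1 and 3 are formal.
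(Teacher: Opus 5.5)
Your proposal is correct and follows essentially the same route as the paper. Both arguments write $M_{x_p}(u)=\bigl((u^2+x_p^2)\Id-x_pN(u)\bigr)/(u^2-x_p^2-x_p)$, split it along $P_\pm(u)$, factor the scalars via $u^2=\rho(u)(\rho(u)+1)$, and multiply using \eqref{p-zhengfu-relation}. One small slip: $\rho(u)=u-\tfrac12+O(u^{-1})$, not $u+O(u^{-1})$, but this does not affect your conclusion that each of $\rho\pm x$ and $\rho+1\pm x$ has the form $u(1+O(u^{-1}))$ and is therefore invertible.
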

\begin{proof}
Let $p\in\{1,\dots,n\}$.
First we observe that
\[
 M_{x_p}(u)=\frac{(u^2+x_p^2)\Id-x_pN(u)}{u^2-x_p^2-x_p}.   
\]
It is easy to verify that $N(u)P_+(u)=\delta(u)P_+(u)$ and $N(u)P_-(u)=-\delta(u)P_-(u)$.
This readily implies that
\begin{align*}
M_{x_p}(u)P_+(u)=&\frac{u^2+x_p^2-x_p\delta(u)}{u^2-x_p^2-x_p}P_+(u)\\
=&\frac{(x_p-\rho(u))(x_p-\rho(u)-1)}{(\rho(u)-x_p)(\rho(u)+x_p+1)}P_+(u)=\frac{\rho(u)+1-x_p}{\rho(u)+1+x_p}P_+(u)
\end{align*}
and
\begin{align*}
M_{x_p}(u)P_-(u)=&\frac{u^2+x_p^2+x_p\delta(u)}{u^2-x_p^2-x_p}P_-(u)\\
=&\frac{(x_p+\rho(u))(x_p+\rho(u)+1)}{(\rho(u)-x_p)(\rho(u)+x_p+1)}P_-(u)=\frac{\rho(u)+x_p}{\rho(u)-x_p}P_-(u).
\end{align*}
Since $P_+(u)+P_-(u)=\Id$, it follows that
\[
 M_{x_p}(u)=\frac{\rho(u)+1-x_p}{\rho(u)+1+x_p}P_+(u)+\frac{\rho(u)+x_p}{\rho(u)-x_p}P_-(u). 
\]
This in combination with \eqref{p-zhengfu-relation} proves the Lemma.
\end{proof}

\begin{Proposition}\label{prop:main prop-1}
The following relation holds in $\Mat_2(\Yq[[u^{-1}]])$:
\begin{align}\label{main prop-matrix=}
\CC(u)=Q(\rho(u))P_-(u)+[Q(\rho(u)+1)]^{-1}P_+(u).
\end{align}
\end{Proposition}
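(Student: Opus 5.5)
The plan is to reduce the identity to the images under all homomorphisms $\vartheta\circ\varphi_n$. By Lemma \ref{lamma:j hc varn=gamman} these images are $\jmath_n^{-1}\circ\gamma_n$, and by \eqref{intersection=0 for rn} the maps $\gamma_n$ have jointly trivial kernel. So it suffices to check that both sides of \eqref{main prop-matrix=} have the same image under $\vartheta\circ\varphi_n$ for every $n\ge 1$, entrywise and coefficientwise in $u^{-1}$. The maps are applied to the matrix entries; the projectors $P_\pm(u)$ have scalar entries in $\mathbb{C}((u^{-1}))$ and are unaffected.

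Before doing so, I will make sure the right-hand side is a genuine element of $\Mat_2(\Yq[[u^{-1}]])$.

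1. Since $H(v)\in v^{-1}\Yq[[v^{-1}]]$ and its coefficients $Z_{2i}$ are central (hence commute), $Q(v)=1+2H(v)+\cdots$ is a well-defined series in $v^{-1}$ with central coefficients.
2. By \eqref{delta u} and \eqref{rho u}, $\rho(u)=u-\tfrac12+O(u^{-1})$. Hence $\rho(u)^{-1}$ and $(\rho(u)+1)^{-1}$ are series in $u^{-1}$ with leading term $u^{-1}$.
3. Substituting $v=\rho(u)$ or $v=\rho(u)+1$ into $Q(v)$ therefore gives well-defined elements of $\Yq[[u^{-1}]]$ with constant term $1$, so $Q(\rho(u)+1)$ is invertible.
4. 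The scalar $1/\delta(u)$ in $P_\pm(u)$ lies in $u^{-1}\mathbb{C}[[u^{-1}]]$, so the products with $P_\pm(u)$ stay inside $\Yq[[u^{-1}]]$.

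Next I compute the images. Substitution $v\mapsto\rho(u)$ commutes with the coefficientwise homomorphism $\vartheta\circ\varphi_n$, because the latter is continuous in the $u^{-1}$-adic sense and only finitely many terms contribute to each coefficient. So \eqref{hc-phin-Qu} gives
\[
\vartheta\circ\varphi_n(Q(\rho(u)))=\prod_{p=1}^n\frac{\rho(u)+x_p}{\rho(u)-x_p},
\]
after multiplying numerator and denominator by $\rho(u)$. Likewise, since the $x_p$ commute,
\[
\vartheta\circ\varphi_n\big(Q(\rho(u)+1)^{-1}\big)=\prod_{p=1}^n\frac{\rho(u)+1-x_p}{\rho(u)+1+x_p}.
\]
On the other side, \eqref{image-hc-varphin-Cu} gives $\vartheta\circ\varphi_n(\CC(u))=\prod_p M_{x_p}(u)$. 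Lemma \ref{lemma:product Mxu decomposition} shows this equals exactly the two products above multiplied by $P_-(u)$ and $P_+(u)$ respectively. Hence the images agree for every $n$.

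To conclude, I take the difference of the two sides. It is a matrix over $\Yq[[u^{-1}]]$, and each coefficient of each entry lies in $\bigcap_n \mathrm{Ker}(\vartheta\circ\varphi_n)=\bigcap_n\mathrm{Ker}\,\gamma_n=(0)$, using that $\jmath_n$ is an isomorphism. So the difference vanishes.

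The main obstacle is the rigor of the substitution step. One must justify that expanding $Q(\rho(u))$ in $u^{-1}$ commutes with applying $\vartheta\circ\varphi_n$. One must also check that the rational functions $\frac{\rho+x_p}{\rho-x_p}$ are expanded in $u^{-1}$ in the same way as in Lemma \ref{lemma:product Mxu decomposition}, i.e. as series in $\rho(u)^{-1}$, which is legitimate because $\rho(u)^{-1}\in u^{-1}\mathbb{C}[[u^{-1}]]$. I would handle both points by a filtration argument: the coefficient of $u^{-m}$ depends only on finitely many coefficients of $Q$.
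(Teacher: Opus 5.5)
Your proposal is correct and follows essentially the paper's own argument. You compute the images of both sides under $\vartheta\circ\varphi_n$ via \eqref{image-hc-varphin-Cu}, Lemma~\ref{lemma:product Mxu decomposition} and Proposition~\ref{prop:hc-varphin-Hu}, then conclude coefficientwise from \eqref{intersection=0 for rn} through Lemma~\ref{lamma:j hc varn=gamman}; your extra checks that $Q(\rho(u))$, $Q(\rho(u)+1)^{-1}$ and $P_\pm(u)$ live over $\Yq[[u^{-1}]]$, and that the substitution $v\mapsto\rho(u)$ commutes with applying $\vartheta\circ\varphi_n$, only make explicit what the paper leaves implicit.
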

\begin{proof}
According to \eqref{image-hc-varphin-Cu} and Lemma \ref{lemma:product Mxu decomposition} we have
\[
\vartheta\circ\varphi_n(\CC(u))=\left(\prod\limits_{p=1}^n\frac{\rho(u)+x_p}{\rho(u)-x_p}\right)P_-(u)+\left(\prod\limits_{p=1}^n\frac{\rho(u)+1-x_p}{\rho(u)+1+x_p}\right)P_+(u).
\]
Moreover, Proposition \ref{prop:hc-varphin-Hu} implies that the right hand side is equal to
\[
\vartheta\circ\varphi_n(Q(\rho(u))P_-(u)+[Q(\rho(u)+1)]^{-1}P_+(u)).    
\]
Since this holds for any $n\geq 1$, our assertion immediately from \eqref{intersection=0 for rn}.
\end{proof}

\begin{Theorem}
The following relations hold in $\Yq[[u^{-1}]]$:
\begin{align}\label{Q-rhou=CD}
Q(\rho(u))=C(u)-(\delta(u)+2u)D(u),
\end{align}
\begin{align}\label{Qrho-1=CD}
[Q(\rho(u)+1)]^{-1}=C(u)+(\delta(u)-2u)D(u).  
\end{align}    
\end{Theorem}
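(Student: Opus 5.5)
Both identities should follow from Proposition \ref{prop:main prop-1} by reading off single entries of the matrix identity \eqref{main prop-matrix=}. Write $a(u):=Q(\rho(u))$ and $b(u):=[Q(\rho(u)+1)]^{-1}$; these are series with coefficients in the (central) subalgebra generated by the $Z_{2i}$. Then \eqref{main prop-matrix=} reads $\CC(u)=a(u)P_-(u)+b(u)P_+(u)$. Since $P_\pm(u)=\frac12\bigl(\Id\pm\delta(u)^{-1}N(u)\bigr)$ and $N(u)$ has entries $2u,1,1,-2u$, comparing the $(1,1)$ and $(1,2)$ entries with \eqref{mathcal-Cu} gives
\begin{align*}
C(u)&=\tfrac12\bigl(a(u)+b(u)\bigr)+\tfrac{u}{\delta(u)}\bigl(b(u)-a(u)\bigr),\\
D(u)&=\tfrac{1}{2\delta(u)}\bigl(b(u)-a(u)\bigr).
\end{align*}

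The second equation gives $b(u)-a(u)=2\delta(u)D(u)$. Substituting this into the first yields $C(u)=\frac12(a(u)+b(u))+2uD(u)$, so $a(u)+b(u)=2C(u)-4uD(u)$. Solving the resulting $2\times 2$ linear system, we get
\[
a(u)=C(u)-2uD(u)-\delta(u)D(u),\qquad b(u)=C(u)-2uD(u)+\delta(u)D(u).
\]
These are exactly \eqref{Q-rhou=CD} and \eqref{Qrho-1=CD}. The $(2,1)$ and $(2,2)$ entries provide a consistency check: $\delta(-u)=\delta(u)$ is even in $u$, so those entries reproduce the same identities with $u$ replaced by $-u$.

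The main point requiring care is the bookkeeping of formal series, namely that each manipulation is legitimate in $\Yq[[u^{-1}]]$ (or in a suitable extension allowing finitely many positive powers of $u$). First, $\delta(u)$ has leading term $2u$, so $\delta(u)^{-1}\in u^{-1}\mathbb{C}[[u^{-2}]]$ and dividing by $\delta(u)$ is allowed. Second, $\rho(u)=u+\tfrac12\cdot(\text{lower order})-\tfrac12$ has leading term $u$, so the substitutions $Q(\rho(u))$ and $Q(\rho(u)+1)$ are well defined as series in $u^{-1}$ and commute with the coefficient extraction above. Third, $\delta(u)\pm2u$ must be read via the expansion \eqref{delta u}: $\delta(u)-2u$ lies in $u^{-1}\mathbb{C}[[u^{-2}]]$, while $\delta(u)+2u$ has leading term $4u$, and it is multiplied by $D(u)$, whose leading coefficient $t_{1,-1}^{(1)}$ already carries $u^{-1}$. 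Finally, because $a(u)$ and $b(u)$ are scalar, central multiples of the matrices $P_\pm(u)$, the entrywise comparison needs no commutation of noncentral elements.
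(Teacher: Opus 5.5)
Your proposal is correct and is essentially the paper's proof: you read off the $(1,1)$ and $(1,2)$ entries of \eqref{main prop-matrix=}, obtain $b-a=2\delta(u)D(u)$ and $C(u)=\frac12(a+b)+2uD(u)$, and solve the resulting linear system. One side remark is wrong, though the proof does not use it: as a series in $u^{-1}$, $\delta(u)=2u\sqrt{1+\frac{1}{4u^2}}$ is odd, so $\delta(-u)=-\delta(u)$, and the $(2,1)$, $(2,2)$ entries agree with the $u\mapsto -u$ substitution only because at the same time $Q(\rho(-u))=[Q(\rho(u)+1)]^{-1}$, i.e.\ your $a$ and $b$ swap (cf.\ Remark \ref{Remark}).
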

\begin{proof}
We put 
\[
a:=\frac{\delta(u)-2u}{2\delta(u)},\quad b=\frac{\delta(u)+2u}{2\delta(u)}. 
\]
Comparing the $(1,1)$ and $(1,2)$ entries in \eqref{main prop-matrix=} yields
\[
C(u)=aQ(\rho(u))+b[Q(\rho(u)+1)]^{-1}
\]
and $2\delta(u)D(u)=Q(\rho(u)+1)^{-1}-Q(\rho(u))$.
It follows that
\[
C(u)=(a+b)Q(\rho(u))+2b\delta(u)D(u).   
\]
Note that $a+b=1$ and $2b\delta(u)=\delta(u)+2u$.
We thus obtain \eqref{Q-rhou=CD}, and \eqref{Qrho-1=CD} follows similarly.
\end{proof}

By solving the equations \eqref{Q-rhou=CD} and \eqref{Qrho-1=CD}, we obtain the following corollary:
\begin{Corollary}
The following relations hold in $\Yq[[u^{-1}]]$:
\begin{align}
C(u)=&\frac{\delta(u)-2u}{2\delta(u)}Q(\rho(u))+\frac{\delta(u)+2u}{2\delta(u)}[Q(\rho(u)+1)]^{-1},\label{Cu=Qu}\\
D(u)=&\frac{[Q(\rho(u)+1)]^{-1}-Q(\rho(u))}{2\delta(u)}.\label{Du=Qu}
\end{align}       
\end{Corollary}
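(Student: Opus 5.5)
The corollary is pure linear algebra over the ring of formal series once the theorem is available. I will treat \eqref{Q-rhou=CD} and \eqref{Qrho-1=CD} as a linear system in the two unknowns $C(u)$ and $D(u)$, with $Q(\rho(u))$ and $[Q(\rho(u)+1)]^{-1}$ regarded as given. The coefficients $\delta(u)\pm 2u$ are central scalar series, so they commute with everything. Ordinary elimination is therefore legitimate, provided the one coefficient we must divide by is invertible.

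\textbf{Solving for $D(u)$.} Subtract \eqref{Q-rhou=CD} from \eqref{Qrho-1=CD}. This gives
\[
[Q(\rho(u)+1)]^{-1}-Q(\rho(u))=\big((\delta(u)-2u)+(\delta(u)+2u)\big)D(u)=2\delta(u)D(u).
\]
By \eqref{delta u}, $\delta(u)$ is invertible, and its inverse $\frac{1}{\delta(u)}=\frac12u^{-1}+\cdots$ was already used to define $P_\pm(u)$. Dividing by $2\delta(u)$ therefore yields \eqref{Du=Qu}.

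\textbf{Solving for $C(u)$.} Substitute this $D(u)$ back into \eqref{Q-rhou=CD}, so that $C(u)=Q(\rho(u))+(\delta(u)+2u)D(u)$. Collect the coefficient of each of $Q(\rho(u))$ and $[Q(\rho(u)+1)]^{-1}$:
\begin{itemize}
\item the coefficient of $[Q(\rho(u)+1)]^{-1}$ is $\frac{\delta(u)+2u}{2\delta(u)}$;
\item the coefficient of $Q(\rho(u))$ is $1-\frac{\delta(u)+2u}{2\delta(u)}=\frac{\delta(u)-2u}{2\delta(u)}$.
\end{itemize}
This is exactly \eqref{Cu=Qu}. As a cross-check, the same formulas with $a$ and $b$ already appear in the proof of the theorem, read off from the $(1,1)$ and $(1,2)$ entries of \eqref{main prop-matrix=}.

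The only point requiring care is that all manipulations take place in $\Yq[[u^{-1}]]$ extended by the series $\delta(u)$, which has a leading $2u$ term. I will check that the products here are well defined, using the same conventions as in Proposition \ref{prop:main prop-1}. In particular, $\frac{\delta(u)\pm2u}{2\delta(u)}$ are honest series in $u^{-1}$ with constant terms $0$ and $1$ respectively. Apart from this, there is no real obstacle.
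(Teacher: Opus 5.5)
Your proposal is correct and is exactly the paper's argument: the paper obtains the corollary by solving the two relations of the preceding Theorem as a linear system in $C(u)$ and $D(u)$, and the same formulas already appear in the Theorem's proof as the $(1,1)$ and $(1,2)$ entries of \eqref{main prop-matrix=}. One harmless slip is that the constant terms in your last paragraph are swapped: $\frac{\delta(u)+2u}{2\delta(u)}$ has constant term $1$, while $\frac{\delta(u)-2u}{2\delta(u)}=\frac{1}{16}u^{-2}+\cdots$ has constant term $0$.
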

\begin{Remark}\label{Remark}
(1) Recall that \eqref{HuQu}, \eqref{delta u} and \eqref{rho u}.
It is easy to see that
\[
Q(-u)=Q(u)^{-1},\quad \delta(-u)=-\delta(u),\quad \rho(-u)=-(\rho(u)+1).  
\]
Hence,
by replacing $u$ by $-u$ in \eqref{Cu=Qu} and \eqref{Du=Qu}, 
we obtain $D(u)=D(-u)$ and 
\[
C(u)-C(-u)=4uD(u).    
\]
These relations were first obtained by Nazarov, cf. \cite[(70)]{Na22}.

(2) In \cite{Na99}, Nazarov investigated the center of the super Yangian $\Y(\fq_n)$.
For the Yangian $\Yq$, we let
\[
    \cZ(u):=t_{1,1}(u)\tilde{t}_{1,1}(u)+t_{1,-1}(-u)\tilde{t}_{1,-1}(u).
\]
Then relations \eqref{tij-u=t-i-iu} imply that $\cZ(u)=\cZ(-u)$. 
Thus
\[
    \cZ(u)=1+\cZ_2u^{-2}+\cZ_4u^{-4}+\cdots.
\]
The elements $\cZ_2,\cZ_4,\dots$ are free generators of the center of $\Yq$ (\cite[Theorem 3.4]{Na99}).
Moreover, in terms of the quantum Berezinian,
we have $\cZ(u)=C(u)C(-u)-D(u)D(-u)$ (\cite[(68)]{Na22}).
Note that $\cZ(u)$ is equal to the determinant of the matrix $\CC(u)$.
The two eigenvalues of $\CC(u)$ in \eqref{main prop-matrix=} are $Q(\rho(u))$ and $[Q(\rho(u)+1)]^{-1}$.
Therefore we obtain 
\[
\cZ(u)=Q(\rho(u))[Q(\rho(u)+1)]^{-1}.    
\]
\end{Remark}
\bigskip
\noindent
{\large\textbf{Declarations}}\\
\\
\textbf{Competing interests}\\
The authors have no competing interests to declare that are relevant to the content of this article.
\\
\\
\textbf{Acknowledgment}\\
This work is supported by the
National Natural Science Foundation of China (12671035),
and the Natural Science Foundation of
Hubei Province (2025AFB716).
In the preparation of this paper, we collaborated with ChatGPT 5.6 Sol. ChatGPT assisted us in clarifying the relation between the homomorphisms $\varphi_n$ and $\gamma_n$,
in particular the formulation and understanding of Lemma \ref{lamma:j hc varn=gamman}.
All statements
and proofs were independently checked and are the sole responsibility of the authors.
\\
\\
\textbf{ Availability of data and materials}
\\
No data was used for the research described in the article

\end{document}